\newtheorem{thm}{Theorem}[section]
\newtheorem{lem}[thm]{Lemma}
\newtheorem{prop}[thm]{Proposition}
\theoremstyle{definition}
\theoremstyle{remark}
\numberwithin{equation}{section}
\newcommand{\R}{\mathbb R}
\newcommand{\eps}{\varepsilon}
\newcommand{\rt}{\rightarrow}
\newcommand{\rc}{\mathcal{R}}
\newcommand{\C}{{\mathbb C}}
\begin{document}

\title[almost quarter-pinching]{Almost quarter-pinched K\"ahler metrics\\ and Chern numbers}
\author{Martin Deraux}
\author{Harish Seshadri}

\address{Institut Fourier,
Universit\'{e} de Grenoble I, 38402 Saint-Martin-d'H\`{e}res Cedex, France}
\email{deraux@ujf-grenoble.fr}
\address{Department of Mathematics,
Indian Institute of Science, Bangalore 560012, India}
\email{harish@math.iisc.ernet.in}

%\subjclass{Primary 53C21; Secondary 53C20}

%\thanks{This work was supported by DST Grant No.
%SR/S4/MS-283/05}

%\date{}%
%\dedicatory{}%
%\commby{}%
% ----------------------------------------------------------------

%$${\rm Subject \ \ Classification: \ \ DG}$$
\vspace{3mm}

\begin{abstract}
Given $n \in {\mathbb Z}^+$ and $\eps >0$, we prove that there
exists $\delta = \delta (\eps,n) >0$  such that the following
holds: If $(M^n,g)$ is a compact K\"ahler $n$-manifold whose
sectional curvatures $K$ satisfy
$$ -1 - \delta \le K \le - \frac {1}{4},$$
and $c_I(M), \ c_J(M)$ are any two Chern numbers of $M$, then
$$\Bigl \vert \frac {c_I(M)}{c_J(M)} - \frac {c_I^0}{c_J^0}
\Bigr \vert < \eps,$$ where $c_I^0, \ c_J^0$ are the corresponding
characteristic numbers of a complex-hyperbolic space-form.

It follows that the Mostow-Siu surfaces and the threefolds of Deraux do not admit K\"ahler
metrics with pinching close to $\frac {1}{4}$.
\end{abstract}

\thanks{Mathematics Subject Classification (1991): Primary 53C21, Secondary 53C20}

\maketitle
% ----------------------------------------------------------------

\section{Introduction}

It is well known that Riemannian manifolds of constant sectional
curvature $-1$ must be locally isometric to real hyperbolic
space. Allowing the sectional curvatures to be merely close to $-1$
(say with pinching close to $1$, i.e. with a suitable normalization
all sectional curvatures are between $-\alpha$ and $-1$ for some
$\alpha$ close to $1$), there are many more examples than hyperbolic
manifolds.

In fact, in~\cite{gt}, Gromov and Thurston construct manifolds of
dimension $\geq 4$ that admit negatively curved Riemannian metrics
with pinching arbitrarily close to $1$, but that do not admit any
metric of constant curvature (in fact they do not admit any locally
symmetric metric).

The K\"ahler analogue of the above examples is not well
understood. Negatively curved K\"ahler metrics must have pinching at
most $\frac{1}{4}$, as can be deduced from an inequality due to Berger
(see~\cite{ber}), and $\frac{1}{4}$-pinched metrics are precisely those
locally isometric to complex hyperbolic space (of dimension $\geq 2$).
It is not known, however, whether there exist negatively curved
K\"ahler manifolds with pinching arbitrarily close to $\frac{1}{4}$ which are not biholomorphic to ball quotients.

The surfaces constructed by Mostow and Siu in~\cite{ms} (as well as
their three-dimensional analogue, see~\cite{mar}) admit K\"ahler
metrics with negative sectional curvatures, but very little is known
about the pinching of these metrics.

The main result of this note relates proximity to $\frac{1}{4}$-pinching with
proximity of the ratios of Chern numbers to the corresponding ratios
of Chern numbers of a complex hyperbolic manifold of the same
dimension. This shows that neither the Mostow-Siu surfaces, nor the
three-dimensional analogues constructed in~\cite{mar}, admit
K\"ahler metric with pinching close to $\frac{1}{4}$. This answers a question
raised by Pansu in~\cite{pan} (echoing a question of Gromov), which
seems not to be answered anywhere in the literature.

\section{Pinching and ratios of Chern numbers}

The main observation of this note is the following:

\begin{thm}\label{thm:main}
Given $n \in {\mathbb Z}^+$ and $\eps >0$, there exists $\delta =
\delta (\eps,n) >0$  such that the following holds: If $(M^n,g)$
is a compact K\"ahler $n$-manifold whose sectional curvatures $K$
satisfy
$$ -1 - \delta \le K \le - \frac {1}{4},$$
and $c_I(M), \ c_J(M)$ are any two Chern numbers of $M$, then
$$\Bigl \vert \frac {c_I(M)}{c_J(M)} - \frac {c_I^0}{c_J^0}
\Bigr \vert < \eps,$$ where $c_I^0, \ c_J^0$ are the corresponding
characteristic numbers of a complex-hyperbolic space-form.
\end{thm}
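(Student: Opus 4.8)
The plan is to exploit the fact that the pinching hypothesis is an open condition, and that equality $K \equiv -\tfrac14$ forces the metric to be complex hyperbolic, together with a compactness/Gromov-Hausdorff argument to pass from the limiting rigidity to a quantitative statement. Concretely, I would argue by contradiction. Suppose the theorem fails for some fixed $n$ and $\eps>0$. Then there is a sequence of compact K\"ahler $n$-manifolds $(M_k, g_k)$ with sectional curvatures $-1-\tfrac1k \le K \le -\tfrac14$, and a choice of Chern numbers $c_{I}, c_{J}$ (we may assume the multi-indices $I,J$ are the same for all $k$, since there are only finitely many in dimension $n$), such that $\bigl|\tfrac{c_I(M_k)}{c_J(M_k)} - \tfrac{c_I^0}{c_J^0}\bigr| \ge \eps$ for all $k$.

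The key point is then to show that the ratios $c_I(M_k)/c_J(M_k)$ converge to $c_I^0/c_J^0$, which is the desired contradiction. For this I would use the Chern--Gauss--Bonnet-type representation of Chern numbers as integrals of universal curvature polynomials: each $c_I(M_k)$ equals $\int_{M_k} P_I(R_{g_k})\, dV_{g_k}$ for a fixed $\mathrm{GL}(n,\C)$-invariant polynomial $P_I$ in the curvature. Pointwise, the pinching bound $-1-\tfrac1k \le K \le -\tfrac14$ confines the full curvature tensor $R_{g_k}$ to a compact neighborhood (in the space of algebraic K\"ahler curvature tensors, modulo the orthonormal frame action) of the set of curvature tensors satisfying $-1-\tfrac1k \le K \le -\tfrac14$; as $k\to\infty$ this neighborhood shrinks to the set of K\"ahler curvature tensors with $-1 \le K \le -\tfrac14$. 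By Berger's inequality (the same one that gives the $\tfrac14$-pinching obstruction) combined with the K\"ahler identities, a K\"ahler curvature tensor all of whose sectional curvatures lie in $[-1,-\tfrac14]$ and whose minimum is $-1$ — equivalently, any tensor in this constrained set — is forced to have its curvature-polynomial densities $P_I$ pinched: more precisely the ratio of pointwise densities $P_I(R)/P_J(R)$ can be made uniformly close to the constant ratio realized by the complex-hyperbolic curvature tensor. The cleanest way to see this is the algebraic rigidity statement: the only K\"ahler curvature tensor with all sectional curvatures in $[-1,-\tfrac14]$ and some sectional curvature equal to $-1$ is (up to the frame action) the complex-hyperbolic one normalized so that holomorphic sectional curvature is $-1$ and the pinching is exactly realized; by continuity of the $P_I$ and compactness, being within $\tfrac1k$ of this constraint set forces $|P_I(R_{g_k})/P_J(R_{g_k})(x) - c_I^0/c_J^0|$ to be uniformly small on $M_k$ for $k$ large.

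Granting that pointwise estimate, integrate: writing $c_I(M_k) = \int_{M_k} \bigl(P_I/P_J\bigr)(R_{g_k})\, P_J(R_{g_k})\, dV_{g_k}$ and using that $P_J(R_{g_k})\,dV_{g_k}$ is a fixed-sign volume form of finite total mass $c_J(M_k)$ (one must check $c_J(M_k) \neq 0$, which holds since in the pinched regime $P_J$ has a definite sign — e.g. $c_J = c_n$ is the Euler characteristic, which by Gauss--Bonnet–Chern is nonzero with the sign of $(-1)^n$, and similar sign control holds for the other $P_I$ in the negatively pinched range), we get $\bigl|c_I(M_k)/c_J(M_k) - c_I^0/c_J^0\bigr| \le \sup_{x\in M_k} \bigl|(P_I/P_J)(R_{g_k})(x) - c_I^0/c_J^0\bigr| \to 0$, contradicting the choice of the sequence.

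I expect the main obstacle to be the algebraic lemma at the heart of the argument: showing that the constraint ``K\"ahler curvature tensor with $-1 \le K \le -\tfrac14$'' forces \emph{all} the curvature polynomial densities $P_I$ to be simultaneously pinched near their complex-hyperbolic values, uniformly over the (compact, but not a single point) constraint set. This requires identifying precisely which algebraic curvature tensors saturate Berger's inequality and controlling the $P_I$ on a neighborhood; handling the $c_J(M_k)\neq 0$ and sign issues for every Chern number $c_I$ (not just Euler characteristic and $c_1^n$) is a second technical point that must be dispatched before the integration step is legitimate.
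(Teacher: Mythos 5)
Your overall architecture coincides with the paper's: reduce everything to a pointwise statement about algebraic K\"ahler curvature tensors (almost-quarter-pinching forces the tensor close to the complex-hyperbolic model $R_0$), feed that into the Chern--Weil representation of the Chern numbers, integrate, and divide. Where you genuinely diverge is in how the pointwise statement is obtained. You propose a soft argument: the sets $S_\delta = \{R \in \rc : -1-\delta \le K \le -\tfrac{1}{4}\}$ are nested and shrink, as $\delta \to 0$, to the singleton $\{R_0\}$ by the classical quarter-pinched rigidity, so compactness plus continuity of the invariant polynomials $P_I$ gives uniform closeness of the densities to $P_I(R_0)$. The paper instead proves the pointwise statement \emph{effectively}: Berger's inequality $|R(X,Y,Z,W)| \le \tfrac{2}{3}(\alpha - \tfrac{1}{4})$ on orthonormal frames pins every holomorphic sectional curvature to within $O(\delta)$ of $-1$, and polarization identities (Kobayashi--Goldberg, Mendon\c{c}a--Zhou) then express the full tensor in terms of holomorphic sectional curvatures, yielding $|R - R_0| < \eps$ with an explicit $\delta(\eps,n)$. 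Your route can be made to work, but note three points. First, for the compactness argument you must verify that $S_{\delta_0}$ is actually compact, i.e.\ that two-sided sectional curvature bounds control the full tensor in $\otimes^4 V^\ast$; this is precisely the polarization step the paper makes explicit, so it is hidden in your sketch rather than avoided. Second, the sequence-of-manifolds and Gromov--Hausdorff scaffolding is dead weight: the needed statement is purely pointwise-algebraic, no convergence of manifolds enters anywhere, and the proof is cleaner (and effective) stated directly. Third, the nonvanishing and sign of $c_J(M)$ is handled most simply not by Gauss--Bonnet sign theorems but by the same pointwise estimate: $c_J(R)$ lies between $(b(n)-\eps_1)\,\omega^n$ and $(b(n)+\eps_1)\,\omega^n$ with $b(n) \neq 0$, so for $\eps_1 < |b(n)|$ the integrand has a definite sign uniformly for every multi-index $J$; this disposes of the technical point you flag at the end in one line.
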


Recall that Mostow and Siu constructed an \emph{infinite} family of
surfaces that admit negatively curved K\"ahler metrics, but the
corresponding ratios $c_1^2/c_2$ do not approach $3$ as one ranges
through these examples (see the introduction of~\cite{ms}).  The
ratios $c_1^3/c_3$ for the threefolds constructed in~\cite{mar}
certainly cannot approach $16$, since only a finite number of
threefolds were constructed there.

Theorem~\ref{thm:main} implies that the Mostow-Siu surfaces (as well
as the threefolds in~\cite{mar}) do not carry K\"ahler metrics with
pinching close to $1/4$.

It is an interesting open question whether the conclusion of Theorem ~\ref{thm:main} 
holds if only assumes the existence of almost-quarter pinched {\it Riemannian} metrics on a compact
K\"ahler manifold. This question is natural in light of the result of Hernandez-Zheng-Yau (\cite{her},~\cite{zy}): If a compact K\"ahler manifold $M$ admits a negatively quarter-pinched Riemannian $g$ metric, then $(M,g)$ is holomorphically  isometric to a complex-hyperbolic space-form.

\section{proof}
Let $V$ be a vector space over $\R$ of real dimension $2n$ with an
almost complex structure $J: V \rt V$ and a $J$ invariant inner
product $\langle \ , \ \rangle$. Hence $J^2=-I$ and $\langle J(v),J(w)
\rangle= \langle v,w \rangle$ for all $v,w \in V$.

A {\it curvature tensor} on $V$ is a 4-linear map $R: V \times V
\times V \times V \rightarrow {\mathbb R}$, i.e. an element of
$\otimes ^4 V^\ast$, satisfying \vspace{2mm}

(1) $R(X,Y,Z,W) = -R(Y,X,Z,W)= -R(X,Y,W,Z)$

(2) $R(Z,W,X,Y)=R(X,Y,Z,W)$

(3) $R(X,Y,Z,W)+ R(X,W,Y,Z)+R(X,Z,W,Y)=0$. \vspace{2mm}

If, in addition, $R$ satisfies \vspace{2mm}

(4) $R(J(X),J(Y),Z,W)=R(X,Y,J(Z),J(W))=R(X,Y,Z,W)$ \vspace{2mm}

then we say that $R$ is a {\it K\"ahler curvature tensor}.
\vspace{2mm}

Let \ $\rc \subset   \otimes^4 V^\ast$ denote the space of
K\"ahler curvature tensors and $R_0 \in \rc$ denote the curvature
tensor of complex hyperbolic space $\C H^n$.

$R_0$ is given by
\begin{align}
-4R_0(u,v,z,w) \ =& \ \langle u,z \rangle \langle v,w \rangle  -
\langle u,w \rangle \langle v,z \rangle +\langle u,J(z) \rangle
\langle v,J(w) \rangle \\ \notag
& -\langle u,J(w) \rangle \langle v,J(z)
\rangle +2\langle u,J(v) \rangle \langle z,J(w) \rangle  \notag
\end{align}
It is well-known that if  $R$ is a K\"ahler curvature tensor on
$V$ whose sectional curvatures are negative and $\frac
{1}{4}$-pinched, then $R = R_0$. We generalize this in Proposition
\ref{ta}. Recall that we regard the space $\rc$ of K\"ahler
curvature tensors as a subspace of $ \otimes^4 V^\ast$. The inner
product on $V$ canonically induces an inner product on $\otimes^4
V^\ast$ and hence on $\rc$.

In what follows, we will use the following notation: For arbitrary
$u,v \in V$, $$K(u,v):=R(u,v,u,v).$$ If $u$ and $v$ are orthornormal,
this is the sectional curvature of the 2-plane spanned by $u$ and
$v$. For any $u$, let $$H(u):=R(u,J(u),u,J(u)).$$ If $u$ is a unit
vector this is the holomorphic sectional corresponding to $u$. Note
that $H(u)=-1$ for all unit vectors $u$ if $R=R_0$.

\begin{lem}
Given $\eps >0$, there exists \ $\eta =\eta (\eps)$ such that if
$R$ is a K\"ahler curvature tensor with $ \vert H(u)+1 \vert <
\eta$ for all unit vectors $u$, then $\vert R -R_0 \vert < \eps$.
\end{lem}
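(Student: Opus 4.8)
The plan is to prove the lemma by a compactness/contradiction argument, exploiting the fact that a Kähler curvature tensor is determined by its holomorphic sectional curvatures. First I would recall (or quickly reprove) the classical polarization fact: the map $R \mapsto (u \mapsto H(u))$ from $\rc$ to the space of degree-$4$ polynomials on $V$ is \emph{injective}. This is the Kähler analogue of the statement that sectional curvatures determine the full curvature tensor; concretely, $H(u) = R(u,J(u),u,J(u))$ is a quartic form in $u$, and expanding $H$ on generic combinations $u = x + ty + sJ(y) + \dots$ and collecting coefficients recovers all components $R(X,Y,Z,W)$ using the Kähler symmetries (1)--(4). So the linear map $\Phi\colon \rc \to \rc$ defined by taking the ``quartic-form part'' of $R$ (equivalently, $R \mapsto H_R$ viewed inside a fixed finite-dimensional space) has trivial kernel, hence is a linear isomorphism onto its image; in particular there is a constant $C = C(n)$ with $\norm{R} \le C \sup_{\norm u = 1}\abs{H_R(u)}$ for all $R \in \rc$. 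Wait --- I must be careful: $H_R$ can vanish on unit vectors without $R$ vanishing only if the kernel were nontrivial, which it is not, so after choosing any norm on the (finite-dimensional) target this estimate holds by equivalence of norms on the image.

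Granting that linear bound, the lemma is almost immediate. Let $R_0 \in \rc$ be the complex-hyperbolic curvature tensor; it satisfies $H_{R_0}(u) = -1$ for all unit $u$, i.e. $H_{R_0}(u) = -\norm{u}^4$ as a quartic form. If $R$ is any Kähler curvature tensor with $\abs{H_R(u) + 1} < \eta$ for all unit $u$, then $R - R_0 \in \rc$ and its associated quartic form satisfies $\abs{H_{R-R_0}(u)} = \abs{H_R(u) - H_{R_0}(u)} = \abs{H_R(u)+1} < \eta$ for every unit vector $u$. Applying the linear estimate to $R - R_0$ gives $\norm{R - R_0} \le C(n)\,\eta$, so it suffices to take $\eta = \eta(\eps) = \eps / C(n)$ (here $n$ is fixed, so I may suppress it or write $\eta(\eps,n)$). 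Note this argument does \emph{not} use any pinching or negativity hypothesis on $R$ --- only that it is a Kähler curvature tensor --- which is exactly the generality the later Proposition will need.

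The main obstacle --- and really the only substantive point --- is establishing the injectivity of $R \mapsto H_R$ on $\rc$, or equivalently producing the explicit linear reconstruction of $R$ from its holomorphic sectional curvatures. The cleanest route is to use the full polarization identity: for a Kähler curvature tensor one has a formula expressing $R(X,Y,Z,W)$ as an explicit linear combination of values $H(u)$ at finitely many vectors $u$ built linearly from $X,Y,Z,W,JX,JY,\dots$ (this is standard; see e.g.\ the identity $32 R(X,Y,Z,W)$ equals an alternating sum of sixteen terms of the form $H(X+JY+Z+\dots)$, combined with the Bianchi identity (3)). Once one has \emph{any} such universal identity with bounded coefficients, the bound $\norm R \le C(n)\sup_{\norm u = 1}\abs{H(u)}$ follows by the triangle inequality after noting $H(u) = H(u/\norm u)\norm u^4$ scales homogeneously. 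An alternative, softer route avoiding the explicit identity: work on the unit sphere of $V$, observe $\rc$ is finite dimensional, check that the only $R \in \rc$ with $H_R \equiv 0$ is $R = 0$ (via the polarization sketch above, done just enough to see the kernel is trivial), and then invoke equivalence of norms / compactness of the unit sphere of $\rc$ to extract the uniform constant. Either way the estimate is linear and the ``$\eta = \eps/C$'' conclusion is forced; the write-up should therefore concentrate on making the reconstruction of $R$ from $H_R$ precise.
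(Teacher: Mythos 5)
Your argument is correct, and it rests on the same underlying fact as the paper's proof: the holomorphic sectional curvature function $u \mapsto H(u)$ determines a K\"ahler curvature tensor linearly, so controlling $H_R - H_{R_0} = H_{R-R_0}$ controls $R - R_0$. The difference is in execution. The paper makes the reconstruction completely explicit: it quotes the two Kobayashi--Goldberg identities for $H(au\pm bv)$ and $H(au\pm bJ(v))$ together with the first Bianchi identity to write $K(u,v)=R(u,v,u,v)$ as a universal linear combination of six values of $H$, and then uses the Mendon\c{c}a--Zhou formula expressing $24\,R(x,y,z,t)$ as an alternating sum of eight sectional curvatures; chaining these gives the bound with concrete (absolute) coefficients and in particular proves the injectivity you need as a byproduct. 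Your preferred route instead takes injectivity of $R\mapsto H_R$ on $\mathcal{R}$ as the classical input, observes that $\sup_{\|u\|=1}|H_R(u)|$ is therefore a norm on the finite-dimensional space $\mathcal{R}$, and invokes equivalence of norms to get $\|R\|\le C(n)\sup_{\|u\|=1}|H_R(u)|$; applying this to $R-R_0$ (which is legitimate since $\mathcal{R}$ is a linear subspace of $\otimes^4V^*$ and $H_{R_0}\equiv -1$ on unit vectors) is cleaner than the paper's component-by-component comparison and makes the linear dependence $\eta=\eps/C(n)$ transparent. What you lose is effectivity: the constant $C(n)$ from equivalence of norms is not explicit, whereas the paper's identities would in principle yield a computable $\eta(\eps,n)$. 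What you gain is brevity and a clear separation of the one substantive input (injectivity, i.e.\ polarization) from soft finite-dimensional linear algebra. Note also that, as you observe, the constant necessarily depends on $n$; the paper's lemma statement writes $\eta=\eta(\eps)$ but its own proof produces $\eta=\eta(\eps,n)$, which is harmless since $n$ is fixed throughout. The only point you should not leave as a sketch in a final write-up is the injectivity itself; either of your two routes (the explicit polarization identity or a direct verification that $H_R\equiv 0$ forces $R=0$ using symmetries (1)--(4)) closes that gap.
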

\begin{proof}
Note that $\vert R -R_0 \vert < \eps$ if $\vert R(x,y,z,t)
-R_0(x,y,z,t) \vert < \frac {\eps}{(2n)^4}$ for every orthonormal
set $\{x,y,z,t\}$.

Let $u, v \in V$ be unit vectors. If $a,b$ are real numbers such
that $a^2 + b^2=1$, then (this calculation is from ~\cite{kg},
Lemma 3)
\begin{align} \notag
H(au+bv)\ + \ &H(au-bv) \ = \ 2a^4H(u)\ + \ 2b^4H(v) \
\\
\notag & + \ 12a^2b^2R(u,J(u),v,J(v))
 - 8a^2b^2R(u,v,u,v)
\end{align}
\begin{align} \notag
H(au+bJ(v)) \ + \ &H(au-bJ(v)) \ = \ 2a^4H(u) \ + \ 2b^4H(v)
\\ \notag & + \ 12a^2b^2R(u,J(u),v,J(v)) - a^2b^2R(u,J(v),u,J(v))
\notag
\end{align}
  We also have the equation
$$R(u,v,u,v) +R(u,J(v),u,J(v))-R(u,J(u),v,J(v))=0,$$ which follows from (3)
and (4) in the definition of a K\"ahler curvature tensor. Take
$a=b = \frac {1}{\sqrt 2}$. It is clear that we can use these
three equations to express $R(u,v,u,v)$ as a linear combination of
$$H(u), H(v),H(\frac {u+v}{\sqrt 2}), H(\frac {u-v}{\sqrt 2}),
H(\frac {u+J(v)}{\sqrt 2}),H(\frac {u-J(v)}{\sqrt 2}).$$ Moreover note
that the coefficients in this expression are absolute constants, not
dependent on anything.

On the other hand, one knows that the full curvature tensor can be
expressed in terms of sectional curvatures. In fact ~\cite{men},

\begin{align}\notag
24 R(x,y,&z,t)= K(x+z,y+t)+K(x-z,y-t)-K(x+z,y-t)-K(x-z,y+t)
\\ \notag &-K(x+t,y+z)-K(x-t,y-z)+K(x+t,y-z)+K(x-t,y+z) \\
              \notag
\end{align}
It is clear from this expression and the earlier one expressing
$K(x,y)$ in terms of holomorphic sectional curvatures that $ \vert
R(x,y,z,t) -R_0(x,y,z,t) \vert < \frac {\eps}{(2n)^4}$ if $\vert
H(u) +1 \vert < \eta$ for an $\eta=\eta(\eps,n)$.

\end{proof}
\begin{prop}\label{ta}
Given $\eps >0$, there exists $\delta =\delta (\eps,n)$ such that
if the sectional curvatures of a K\"ahler curvature tensor $R \in
\rc$ satisfy
$$ -1 - \delta \ \le  \ K  \ \le \ - \frac {1}{4} $$
then
\begin{equation}\notag
\vert R - R_0 \vert < \eps
\end{equation}
in the norm on $ \otimes^4 V^\ast$.
\end{prop}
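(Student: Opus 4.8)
The plan is to combine the previous Lemma with a compactness argument, invoking the known rigidity of negatively $\frac14$-pinched K\"ahler curvature tensors ($R = R_0$). Fix $\eps > 0$ and let $\eta = \eta(\eps)$ be the constant furnished by the Lemma. It then suffices to produce $\delta = \delta(\eps,n) > 0$ such that every $R \in \rc$ with $-1 - \delta \le K \le -\frac14$ satisfies $\abs{H(u) + 1} < \eta$ for all unit vectors $u$; applying the Lemma then gives $\abs{R - R_0} < \eps$.

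I would establish this reduced statement by contradiction. If no such $\delta$ existed, then for every integer $k \ge 1$ there would be $R_k \in \rc$ with $-1 - \frac1k \le K_k \le -\frac14$ and a unit vector $u_k$ with $\abs{H_k(u_k) + 1} \ge \eta$. To control the $R_k$ I would use the formula recalled in the proof of the Lemma: for every orthonormal frame, $R_k(x,y,z,t)$ is a fixed universal linear combination of sectional curvatures of $R_k$ evaluated on vectors of norm at most $\sqrt2$; since all sectional curvatures of $R_k$ lie in $[-2, -\frac14]$, this yields an a priori bound $\abs{R_k} \le C(n)$ independent of $k$. As $\rc$ is a finite-dimensional closed subspace of $\otimes^4 V^\ast$, I could then pass to a subsequence with $R_k \to R_\infty \in \rc$.

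The next step is to identify the limit. For each fixed orthonormal pair $u, v$ one has $K_\infty(u,v) = \lim_k K_k(u,v) \in \bigcap_k [-1-\frac1k, -\frac14] = [-1, -\frac14]$, so $R_\infty$ is a K\"ahler curvature tensor whose sectional curvatures are negative and $\frac14$-pinched; the two-sided bound also pins down the normalization. Hence $R_\infty = R_0$ by the rigidity recalled above, and so $H_\infty(u) = -1$ for every unit $u$. On the other hand $R_k \to R_\infty$ in $\rc$ forces the functions $u \mapsto H_k(u) = R_k(u,J(u),u,J(u))$ to converge to $u \mapsto H_\infty(u) \equiv -1$ uniformly on the compact unit sphere of $V$; thus $\sup_{\abs{u}=1} \abs{H_k(u)+1} \to 0$, contradicting $\abs{H_k(u_k)+1} \ge \eta$. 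This yields the desired $\delta$. (Here $n \ge 2$, which is the range relevant to Theorem~\ref{thm:main} and in which the $\frac14$-pinching rigidity holds.)

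The only step carrying real content is the a priori bound $\abs{R_k} \le C(n)$ that makes the sequence precompact, and that is handed to us by the formula expressing a curvature tensor through its sectional curvatures; granting it and the classical rigidity, the rest is soft. The price of this approach is that it produces no explicit $\delta(\eps,n)$: an effective version would instead call for a quantitative Berger-type inequality bounding $\abs{H(u)+1}$ in terms of the pinching defect $\delta$, which I do not pursue here since Theorem~\ref{thm:main} does not require it.
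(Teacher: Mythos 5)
Your argument is correct, but it proves the proposition by a genuinely different (and softer) route than the paper. The paper's proof is direct and quantitative: it invokes Berger's estimate $\abs{R(X,Y,Z,W)} \le \frac{2}{3}(\alpha - \frac{1}{4})$ for orthonormal sets, combines it with the K\"ahler identity $K(u,v) + K(u,J(v)) - R(u,J(u),v,J(v)) = 0$ and with the expansion of $R$ on the rotated frame $\frac{1}{\sqrt 2}(u - J(v)), \dots$, and extracts the explicit bound $\abs{H(u)+1} < \eta$ with $\delta = \min\{\eta/3, \delta_1\}$. You instead run a compactness/contradiction argument in the finite-dimensional space $\rc$, using the a priori bound coming from the expression of $R$ through sectional curvatures and then the classical rigidity of exactly $\frac14$-pinched K\"ahler curvature tensors to identify the limit as $R_0$. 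This is legitimate and not circular: the rigidity statement is recorded in the paper as well known (and is itself a consequence of Berger's inequality), so you may cite it; and you correctly flag that $n \ge 2$ is needed (for $n=1$ the statement fails, and the paper's own proof also implicitly assumes $n \ge 2$ when choosing $v$ with $\{u, J(u), v, J(v)\}$ orthonormal). Two small points to tighten: the a priori bound must cover components of $R$ with repeated arguments, not just $R(x,y,z,t)$ on orthonormal quadruples — this follows by polarization, e.g.\ $2R(x,y,x,t) = K(x,y+t) - K(x,y) - K(x,t)$, but it deserves a word; and your set-theoretic phrasing $\lim_k K_k(u,v) \in \bigcap_k[-1-\frac1k,-\frac14]$ should really be the statement that the limit of the endpoints traps the limit of the values. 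The trade-off is exactly as you say: the paper's approach yields an effective $\delta(\eps,n)$, while yours is shorter and conceptually transparent but non-constructive; since Theorem~\ref{thm:main} only needs existence of $\delta$, either suffices.
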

\begin{proof}

By the lemma above, it is enough to show that there is a $\delta
=\delta (\eta)
>0$ such that $-1 - \delta \ \le  \ K  \ \le \ - \frac {1}{4}$
implies $ \vert H(u)+1 \vert < \eta$ for all unit vectors $u$.
\vspace{2mm}

Let $u$ be any unit vector in $V$. Choose $v$ so that $\{
u,J(u),v,J(v) \}$ is an orthonormal set. By (3) and (4) in the
definition of a K\"ahler curvature tensor,
\begin{equation} \label{one}
K(u,v) +K(u,J(v))-R(u,J(u),v,J(v))=0.
\end{equation}
We recall the following estimate of Berger (\cite{ber}, Page 69,
(7)). Note that though Berger works with positive pinching, his
proof works for negative pinching as well. Also, our pinching
hypothesis and the conclusion can be obtained from his by a
rescaling. If $R$ is a curvature tensor (not necessarily
K\"ahler) which is $\alpha$-pinched, i.e. $ -\alpha \le K \le -
\frac{1}{4}$ then
$$ \vert R(X,Y,Z,W) \vert \le \frac {2}{3}(\alpha - \frac {1}{4}).$$
for any orthonormal set $\{X,Y,Z,W \}$.

Let $\eta >0$ be as in Lemma \ref{ta}. By Berger's estimate there
exists $\delta_1 = \delta_1 ( \eta)$ such that if the sectional
curvatures $K$ satisfy
\begin{equation}\label{pre}
-1 - \delta_1 \ \le  \ K  \ \le \ - \frac {1}{4}
\end{equation}
then
\begin{equation} \label{two}
\vert R(u,J(u),v,J(v)) \vert \le \frac {1}{2} + \frac {\eta}{6}.
\end{equation}

By (\ref{one}) and (\ref{two})
\begin{align}\notag
K(u,v) \ &= \ - K(u,J(v)) \ + \ R(u,J(u),v,J(v)) \\ \notag
        & \ge \  \frac{1}{4} \ - \ \frac{1}{2} \ -  \frac {\eta}{6} \ = \ - \frac{1}{4} \ -  \frac {\eta}{6}
        \label{three}\\
\end{align}
Of course, we have the same estimates for $K(u,J(v)), \
K(J(u),v))$ and $ K(J(u),J(v))$. Hence,
\begin{equation}\label{four}
-\frac {1}{4} - \frac {\eta}{6} \ \le \ c \ \le \ - \frac {1}{4}, \
\ \ \ \ \ - \frac {1}{2} - \frac {\eta}{3} < R(u,J(u),v,J(v)) \le -
\frac {1}{2},
\end{equation}
where
$$c \in \{K(u,v), \ K(u,J(v)), \ K(J(u),v), \ K(J(u),J(v)) \}.$$
\vspace{2mm}

Applying the second inequality in (\ref{four}) to the orthonormal
set
$$\{ \frac {1}{\sqrt 2}(u -J(v)), \ \frac {1}{\sqrt 2}(J(u) +v),
\ \frac {1}{\sqrt 2}(u +J(v)), \ \frac {1}{\sqrt 2}(J(u) -v)\} ,$$
we get
\begin{equation} \label{five}
 R( u -J(v), \ J(u) +v, \ u +J(v), \ J(u) -v ) \le - 2.
\end{equation}
Expanding the left hand side
\begin{equation}
R( u -J(v), \ J(u) +v, \ u +J(v), \ J(u) -v ) =
\end{equation}
$$H(u) + H(v) +2R(u,J(u),v,J(v))
-4K(u,v)$$
%\end{align}
Combining (\ref{four}) and (\ref{five}) we have
\begin{align}
 \ H(u) \ + \ H(v) \ - \ 4K(u,v) \ < \ -1 +  \frac {2\eta}{3}. \label{eh1}
\end{align}
%Working with the orthonormal set $$\{  \frac {1}{\sqrt 2} (u-v), \
 %\frac {1}{\sqrt 2}(J(u)-J(v)), \  \frac {1}{\sqrt 2}(u+v), \
%\frac {1}{\sqrt 2}(J(u)+J(v)) \}$$ we get \%begin{align} -1 - 8
%\frac {\eta}{3} \ < \ K(u,J(u)) \ + \ K(v,J(v)) \ - \ 4K(u,J(v)) \ < \ -1+2
%\frac {\eta}{3}. \label{eh2}
%\end{align}
%Adding (\ref{eh1}) and (\ref{eh2}) and using the first inequality
%in (\ref{four}) we get
%$$ R(u,v,u,v) = R(u,J(v),u,J(v))=  - \frac {1}{4}$$

(\ref{eh1}) and the first inequality in (\ref{four}) imply
$$   H(u) \ + \ H(v) \ < \ -2 +  \frac {2\eta}{3}.$$
Finally, using (\ref{pre}), the above inequality implies
$$-1 - \delta_1   \ < \ H(u) \ < \ -1+  \frac {2\eta}{3} + \delta_1 .$$
Let
$$ \delta = min \{ \frac{\eta}{3}, \delta_1 \}.$$
Then $$\vert H(u) +1 \vert \ < \  \eta$$ and the proof is complete
since the unit vector $u$ was arbitrary.
%Since all the sectional curvatures are $ \ge -1$, we get $K(u,J(u))=K(v,J(v))=-1$.
%As the unit vector $u$ was arbitrary, we conclude that all the
%holomorphic sectional curvatures of $R$ are close to $-1$. Now it
%is well-known that holomorphic sectional curvatures completely
%determine K\"ahler curvature tensors (see, for instance,
%~\cite{kn}, Page 166):

%{\it If $R_1$ and $R_2$ are two K\"ahler curvature tensors such
%that
%$$  H_1(u) \ = \ H_2(u) $$
%for all $u \in S^{2n-1}= \{ v \in V : \vert v \vert =1 \}  \subset
%V$, then $R_1=R_2$.}

%In particular, given $\eps >0$, there euists $ \frac {\eta}{3}
%=\frac {\eta}{3} (\eps)$ such that if $ \vert H(u)+1 \vert < \eps$
%for all $u \in S^{2n-1}$, then $\vert R -R_0 \vert < \eps$. Here
%we have used the fact the $R_0$ has constant holomorphic sectional
%curvature $-1$.

\end{proof}

Let $\omega$ denote the K\"ahler form of $V$, defined by $\omega
(u,v) =\langle u, J(v) \rangle$ Then the volume form on $V$, with
$V$ carrying the natural orientation coming from $J$, is just
$\omega ^n$.

Let $I=(a_1,...,a_n)$ and $J=(b_1,...,b_n)$ be muti-indices in
$({\mathbb Z}_{\ge 0})^n$ with $\Sigma_i ia_i=\Sigma_iib_i=n$.
Given an K\"ahler curvature tensor $R$ we can form the
corresponding Chern forms $c_I(R) = c_1^{a_1}(R) \wedge...\wedge
c_n^{a_n}(R)$ and $c_J(R)=c_1^{b_1}(R) \wedge...\wedge
c_n^{b_n}(R)$. These are elements of $\wedge^n V^\ast$.

%Given $\eps_2 >0$, note that there exists $\eps_1 =\eps_1( \eps_2,
%n)>0$ such that if $ \vert R- R_0 \vert < \eps_1$ then
%$$ \vert c_I(R)  - c_I(R_0) \vert < \eps_2, \ \ \ \vert c_J(R) - c_J(R_0) \vert < \eps_2.$$
%in the norm on $\wedge^n V^\ast $.

Given $\eps_1$ (to be chosen below),  Proposition (\ref{ta}) (and
the obvious continuous dependence of the Chern forms on the full
curvature tensor) implies there is a $\delta = \delta(\eps_1,n)$
such that $-1 -\delta < K < -1$ implies that

\begin{equation} \label{qua}
\vert c_I(R)  - c_I(R_0) \vert < \eps_1, \ \ \
 \vert c_J(R) - c_J(R_0) \vert < \eps_1
\end{equation}
in the norm on $ \wedge^n V^\ast$.

Let $c_I(R_0) = a(n) \omega^n$ and $c_J(R_0) = b(n) \omega^n$.
Choose $\eps_1 = \eps_1(\eps, n)$ so that
\begin{equation}\label{bla}
\frac {a(n) - \eps_1}{b(n) + \eps_1} > \frac {a(n)}{b(n)} - \eps
\ \ \ \ \ \ \frac {a(n) + \eps_1}{b(n) - \eps_1} < \frac
{a(n)}{b(n)} + \eps
\end{equation}

%We then have
%\begin{equation} \label{bla}
%\frac {a(n)} {b(n)}= \frac {2(n+1)}{n}.
%\end{equation}
Now (\ref{qua}) implies
$$  (a(n) - \eps_1) \omega^n  \ <  \ c_I(R)  \ < \  (a(n) +\eps_1)
\omega^n$$ and
$$  (b(n) - \eps_1) \omega^n  \ < \ c_J(R) \  <  \ (b(n) +\eps_1)
\omega^n$$ \vspace{3mm}

We now turn to a compact K\"ahler manifold with almost-quarter
pinching as in Theorem 1.1. We can integrate the above
inequalities on $M$ to get
$$  (a(n) - \eps_1) Vol(M)  \ < \ c_I(M) \ <  \ (a(n) +\eps_1) Vol(M) $$
and
$$ (b(n) - \eps_1) Vol(M)  \ <  \ c_J(M) \ <  \ (b(n) +\eps_1) Vol(M).$$
Dividing these two inequalities and using (\ref{bla}) gives the
desired result.

\end{document}